\theoremstyle{plain}
\newtheorem{theorem}{Theorem}[section]
\newtheorem{lemma}[theorem]{Lemma}
\newtheorem{prop}[theorem]{Proposition}
\theoremstyle{definition}
\newtheorem{defn}[theorem]{Definition}
\theoremstyle{remark}
\newtheorem{remark}[theorem]{Remark}
\newcommand{\N}{\mathbb{N}}
\newcommand{\Z}{\mathbb{Z}}
\renewcommand{\emptyset}{\varnothing}
\title[Strongly graded Leavitt path algebras]{Strongly graded Leavitt path algebras}
\date{\today}
\begin{document}

\author{Patrik Lundstr\"{o}m}
\address{Department of Engineering Science,\\
University West, SE-46186 Trollh\"{a}ttan, Sweden}
\email{patrik.lundstrom@hv.se}

\author{Johan \"{O}inert}
\address{Department of Mathematics and Natural Sciences,\\
Blekinge Institute of Technology,
SE-37179 Karlskrona, Sweden}
\email{johan.oinert@bth.se}

\subjclass[2010]{16S99, 16W50}
\keywords{strongly graded ring, Leavitt path algebra}

\begin{abstract}
Let $R$ be a unital ring, let $E$ be a directed graph
and recall that the Leavitt path algebra $L_R(E)$
carries a natural $\Z$-gradation.
We show that $L_R(E)$ is strongly $\Z$-graded
if and only if
$E$ is row-finite, has no sink, and satisfies Condition (Y).
Our result generalizes a recent result by Clark, Hazrat and Rigby,
and the proof is short and self-contained.
\end{abstract}

\maketitle

%\tableofcontents

\section{Introduction}

Given an associative unital ring $R$ and a directed graph $E$, one may define
the Leavitt path algebra $L_R(E)$ (see Section~\ref{Subsec:LPA}).
Since their introduction in 2005, Leavitt path algebras have grown into
a common theme within modern 
algebra
on the interface between ring theory and operator algebra.
For an excellent account of the history of the subject and a review of some of its main developments, we refer the reader to Abrams' survey article \cite{AbramsDecade}.

Every Leavitt path algebra comes equipped with a natural $\Z$-gradation (see Remark~\ref{rem:Zgradation})
and there are multiple examples of when the graded structure
has been utilized in the study of $L_R(E)$.
On the other hand, there are only a few examples of when properties of the $\Z$-gradation itself
have been studied.
Notably,
Hazrat has shown that for a finite directed graph $E$,
the Leavitt path algebra $L_R(E)$ with coefficients in a unital ring $R$
is strongly $\Z$-graded if and only if $E$ has no sink (see \cite[Theorem 3.15]{H2013A}).
The authors of the present article have shown
that if $E$ is a finite directed graph, then $L_R(E)$ is always \emph{epsilon-strongly $\Z$-graded} (see \cite[Theorem 1.2]{NO2020}).

In their recent article \cite{RigbyEtAl},
Clark, Hazrat and Rigby introduced the following condition.

\begin{defn}[Clark et al \cite{RigbyEtAl}]\label{def:CondY}
A directed graph $E$ satisfies
\emph{Condition (Y)} if for every $k \in \N$
and every infinite path $p$, there exists
an initial subpath $\alpha$ of $p$ and a finite path
$\beta$ such that $r(\beta)=r(\alpha)$ and $|\beta|-|\alpha|=k$.
\end{defn}

Using results on Steinberg algebras, 
in the case when $K$ is a commutative unital ring,
they showed that $L_K(E)$ is strongly $\Z$-graded
if and only if
$E$ is row-finite, has no sink, and satisfies Condition (Y)
(see \cite[Theorem 4.2]{RigbyEtAl}).

We now introduce the following seemingly weaker condition,
which can actually be shown to be equivalent to Condition (Y).

\begin{defn}\label{def:CondY1}
A directed graph $E$ satisfies
\emph{Condition (Y1)} if for every infinite path $p$, there exists
an initial subpath $\alpha$ of $p$ and a finite path
$\beta$ such that $r(\beta)=r(\alpha)$ and $|\beta|-|\alpha|=1$.
\end{defn}

Our main result is the following generalization of
\cite[Theorem 4.2]{RigbyEtAl}.

\begin{theorem}\label{thm:main}
Let $R$ be a unital, but not necessarily commutative, ring and let $E$ be a directed graph.
Consider the Leavitt path algebra $L_R(E)$ with its canonical $\Z$-gradation.
The following three assertions are equivalent:
\begin{enumerate}[{\rm (i)}]
	\item $L_R(E)$ is strongly $\Z$-graded;
	\item $E$ is row-finite, has no sink, and satisfies Condition (Y);
	\item $E$ is row-finite, has no sink, and satisfies Condition (Y1).
\end{enumerate}
\end{theorem}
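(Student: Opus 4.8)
The plan is to prove the cycle of implications $(i)\Rightarrow(ii)\Rightarrow(iii)\Rightarrow(i)$, with the first and last being the substantive parts and the middle being trivial since Condition (Y) is formally stronger than Condition (Y1) (take $k=1$).

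For $(iii)\Rightarrow(i)$: recall that a $\Z$-graded ring $S=\bigoplus_{n\in\Z}S_n$ is strongly graded if and only if $S_1 S_{-1}=S_0$ and $S_{-1}S_1=S_0$, and in fact since $1\in S_0$ it suffices to exhibit, for each of $n=1$ and $n=-1$, elements whose products sum to $1$; moreover by a standard reduction it is enough to show $1\in S_1S_{-1}$ and $1\in S_{-1}S_1$. In the Leavitt path algebra, $1$ (when $E$ is finite) or more generally the local units $v$ for $v\in E^0$ generate $S_0$, so the task reduces to: for every vertex $v$, write $v\in L_R(E)_1 L_R(E)_{-1}$ and $v\in L_R(E)_{-1}L_R(E)_1$. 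The relation $v=\sum_{e\in s^{-1}(v)} ee^*$ (the (CK2) relation) immediately gives $v\in L_R(E)_1 L_R(E)_{-1}$ provided $v$ emits finitely many and at least one edge — this is exactly row-finiteness together with "no sink". The genuinely new ingredient is showing $v\in L_R(E)_{-1}L_R(E)_1$ for every $v$; this is where Condition (Y1) enters. First I would argue that row-finiteness plus no sinks plus (Y1) forces $E^0$ to be such that $L_R(E)$ has local units indexed by $E^0$ and, crucially, that (Y1) can be upgraded from "infinite paths" to a statement about vertices: I would prove a lemma that under row-finiteness and no sinks, (Y1) is equivalent to saying that for every vertex $v$ there is a finite path $\alpha$ with $s(\alpha)=v$ and a finite path $\beta$ with $r(\beta)=r(\alpha)$ and $|\beta|=|\alpha|+1$. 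Given such $\alpha,\beta$ at a vertex $v$, the element $\alpha\alpha^*$ lies in $L_R(E)_0$ and equals $\sum$ over the relevant edges... more directly, $\beta^*\beta = r(\beta) = r(\alpha)$ and $\alpha r(\alpha)\alpha^* = \alpha\alpha^*$, so $\alpha\beta^*\cdot\beta\alpha^* = \alpha r(\alpha)\alpha^*=\alpha\alpha^*$, where $\alpha\beta^*\in L_R(E)_{|\alpha|-|\beta|}=L_R(E)_{-1}$ and $\beta\alpha^*\in L_R(E)_{1}$; then summing $\alpha\alpha^*$ over a complete set of such paths from $v$ (using (CK2) iterated) recovers $v$, placing $v\in L_R(E)_{-1}L_R(E)_1$.

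For $(i)\Rightarrow(ii)$: assume $L_R(E)$ is strongly graded. Standard facts about graded rings give that $L_R(E)_1 L_R(E)_{-1}=L_R(E)_0$ and $L_R(E)_{-1}L_R(E)_1=L_R(E)_0$, and one knows $L_R(E)_0$ is a direct limit of matrix algebras over $R$ with matrix units indexed by pairs of paths of equal length sharing a range. From $v\in L_R(E)_1L_R(E)_{-1}$ for a vertex $v$ one extracts, by the grading and the normal form of elements of $L_R(E)$, that $v$ emits at least one edge (no sink) and that the sum $\sum_{e\in s^{-1}(v)}ee^*$ is finite, i.e. $|s^{-1}(v)|<\infty$ (row-finiteness); here one uses that a well-defined element of the algebra cannot be an infinite sum, combined with uniqueness of the normal form. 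To get Condition (Y1) — equivalently (Y), by the lemma alluded to above — I would take an arbitrary infinite path $p$ and suppose for contradiction that no initial subpath $\alpha$ of $p$ admits a finite path $\beta$ with $r(\beta)=r(\alpha)$, $|\beta|=|\alpha|+1$. Then I would show that the graded component $L_R(E)_{-1}$, multiplied by $L_R(E)_1$, cannot reach the "diagonal" idempotents $\alpha_n\alpha_n^*$ (where $\alpha_n$ is the length-$n$ initial segment of $p$) all the way down the path, which contradicts $L_R(E)_{-1}L_R(E)_1=L_R(E)_0$; more precisely, I would use a grading-preserving evaluation or a direct computation in the $0$-component showing that the image of $L_R(E)_{-1}\otimes L_R(E)_1\to L_R(E)_0$ misses the corner determined by the path $p$. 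This direction is the main obstacle: it requires carefully controlling which elements of $L_R(E)_0$ can be written as products from degree $-1$ and degree $1$, and this is essentially a bookkeeping argument on paths, but the authors' promise of a "short and self-contained" proof suggests there is a clean way to see that $v\in L_R(E)_{-1}L_R(E)_1$ is equivalent, via the normal-form, to the existence of the path pair $(\alpha,\beta)$ with $|\beta|-|\alpha|=1$.

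I expect the hardest step to be the equivalence of Condition (Y1) with the per-vertex path condition, and the extraction of row-finiteness, no sinks, and (Y1) from the single algebraic identity $L_R(E)_0 = L_R(E)_{-1}L_R(E)_1 = L_R(E)_1L_R(E)_{-1}$; once that dictionary between graded-ring identities and combinatorial path data is set up, both nontrivial implications should follow by tracking degrees in the Leavitt normal form. Throughout I would use only the defining relations of $L_R(E)$, the characterization of strong gradation via $S_1S_{-1}=S_{-1}S_1=S_0$, and the existence of local units, keeping the argument self-contained as advertised.
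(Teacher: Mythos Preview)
Your plan has the right overall shape, but the key lemma you propose for $(iii)\Rightarrow(i)$ is false, and the gap it conceals is precisely the main idea of the proof.

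You claim that under row-finiteness and no sinks, Condition~(Y1) is equivalent to: for every vertex $v$ there exist finite paths $\alpha,\beta$ with $s(\alpha)=v$, $r(\beta)=r(\alpha)$, and $|\beta|=|\alpha|+1$. This per-vertex condition is strictly weaker. Take $E^0=\{v_0,v_1,\ldots\}\cup\{w\}$ with edges $v_i\to v_{i+1}$, $v_i\to w$ for all $i\geq 0$, and a loop at $w$. From every vertex there is a length-$1$ path to $w$, and $w$ admits incoming paths of all lengths, so your per-vertex condition holds everywhere. But the infinite path $v_0\to v_1\to\cdots$ violates (Y1), since the only paths ending at $v_n$ have length at most $n$.

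More to the point, even a correct per-vertex statement would not give what you actually use. To write $v=\sum_i\alpha_i\alpha_i^*$ with each $r(\alpha_i)$ a turning node, you iterate (CK2) and stop each branch when it hits a turning node; for this to terminate you need that \emph{every} path from $v$ eventually has an initial segment hitting a turning node, with a uniform bound on the length. Extracting this from (Y1) is a compactness step: the paths from $v$ that have not yet hit a turning node form a finitely-branching tree (by row-finiteness), and (Y1) says it has no infinite branch, so K\"onig's lemma forces it to be finite. The paper isolates exactly this step as an inverse-limit lemma on finite sets; your phrase ``summing over a complete set of such paths'' hides the entire argument.

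For $(i)\Rightarrow(ii)$, your contradiction sketch is vague, and there is a much cleaner direct route that also removes any need to compare (Y1) with (Y). Given an infinite path $p$ and $k\geq 1$, write $s(p)=\sum_i \alpha_i\beta_i^*\gamma_i\delta_i^*$ with $\alpha_i\beta_i^*\in S_{-k}$ and $\gamma_i\delta_i^*\in S_k$; multiply on the right by an initial subpath $p'$ of $p$ longer than every $\delta_i$. Since $s(p)\,p'=p'\neq 0$, some $\delta_m$ must be an initial subpath of $p$, and then $(\alpha,\beta)=(\delta_m,\gamma_m)$ witnesses Condition~(Y) at $p$ for that $k$. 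This proves (Y) directly for all $k$, so the cycle $(i)\Rightarrow(ii)\Rightarrow(iii)\Rightarrow(i)$ closes without any separate lemma relating (Y) and (Y1).
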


In the context of graph C$^*$-algebras, Chirvasitu \cite[Theorem 2.14]{Chirvasitu}
has shown that the assertions on $E$ made in Theorem~\ref{thm:main}(ii)
are equivalent to freeness of the full gauge action on the
graph C$^*$-algebra $C^*(E)$.

Here is an outline of this article.

In Section~\ref{Sec:Prel} we record definitions and results that will be used in the sequel.
In Section~\ref{Sec:Nec}, we prove Lemma~\ref{lemma:nec}
which establishes the implication (i)$\Rightarrow$(ii) in Theorem~\ref{thm:main}.
Notice that the implication (ii)$\Rightarrow$(iii) is trivial.
In Section~\ref{Sec:Suff}, we prove Proposition~\ref{prop:suff} 
which establishes the implication (iii)$\Rightarrow$(i) in Theorem~\ref{thm:main},
thereby finishing the proof of Theorem~\ref{thm:main}.
In Section~\ref{sec:Examples}, we illustrate Condition (Y)
with numerous concrete examples of directed graphs.

\begin{remark}
Whereas the proof of \cite[Theorem 4.2]{RigbyEtAl} uses a lot of highly technical results 
concerning the so-called path groupoid from \cite{kumjian} as well as the existence of
an isomorphism between Leavitt path algebras and Steinberg algebras from \cite{clark}, 
our approach gives a direct proof that holds in a more general situation.
We are also able to distil down all of the topological machinery from \cite{RigbyEtAl} into a well-known result 
concerning non-emptiness of an inverse limit of finite sets (see Lemma \ref{lemma:handy}).
\end{remark}

\section{Preliminaries}\label{Sec:Prel}

In this section we will record definitions and results that will be used in the rest of this article.

\subsection{Strongly $\Z$-graded rings}

Recall that an associative ring $S$ is said to be \emph{$\Z$-graded}
if, for each $n\in \Z$, there is an additive subgroup $S_n$ of $S$
such that
$S = \oplus_{n\in \Z} S_n$
and for all $n,m\in \Z$ the inclusion $S_n S_m \subseteq S_{n+m}$ holds.
If, in addition, for all $n,m\in \Z$, the equality $S_n S_m = S_{n+m}$ holds,
then $S$ is said to be \emph{strongly $\Z$-graded}.

Throughout the rest of this subsection $S$ denotes a, not necessarily unital, $\mathbb{Z}$-graded ring.

\begin{prop}\label{prop:stronglyZgraded}
The ring $S$ is strongly $\Z$-graded if and only if 
$S_0 S_n = S_n = S_n S_0$, for every $n \in \mathbb{Z}$, 
and the equalities $S_1 S_{-1} = S_{-1} S_1 = S_0$ hold. 
\end{prop}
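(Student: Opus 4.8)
The plan is to prove both implications directly from the definition of strongly $\Z$-graded. The forward implication is immediate: if $S$ is strongly $\Z$-graded, then $S_m S_n = S_{m+n}$ for all $m,n \in \Z$; specialising $(m,n)$ to $(0,n)$, $(n,0)$, $(1,-1)$, and $(-1,1)$ yields exactly the asserted equalities.

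For the converse, assume $S_0 S_n = S_n = S_n S_0$ for every $n \in \Z$ and $S_1 S_{-1} = S_{-1} S_1 = S_0$. I must show $S_m S_n = S_{m+n}$ for all $m,n \in \Z$. The inclusion $S_m S_n \subseteq S_{m+n}$ holds by hypothesis (since $S$ is $\Z$-graded), so only $S_{m+n} \subseteq S_m S_n$ requires work. The key step is to first establish $S_k S_{-k} = S_0$ and $S_{-k} S_k = S_0$ for every $k \in \N$, which I would do by induction on $k$: the base case $k=1$ is given, and for the inductive step one writes, using associativity and the hypotheses,
\[
S_{k+1} S_{-(k+1)} = S_1 (S_k S_{-k}) S_{-1} = S_1 S_0 S_{-1} = S_1 S_{-1} = S_0,
\]
after first checking that $S_{k+1} = S_1 S_k$ and $S_{-(k+1)} = S_{-k} S_{-1}$ (which themselves follow by an easy induction from $S_{n} = S_0 S_n$, $S_{n+1} \supseteq S_1 S_n S_0$-type manipulations, or more simply from $S_{k+1} = S_1 S_{-1} S_{k+1} = S_1 (S_{-1} S_{k+1})$ together with $S_{-1}S_{k+1} \subseteq S_k$ and $S_1 S_k \subseteq S_{k+1}$, forcing equality). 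Once $S_k S_{-k} = S_{-k} S_k = S_0$ is known for all $k \ge 0$, the general identity follows: given $m, n \in \Z$, pick a sufficiently large $k$ so that interposing $S_{-k}S_k$ or $S_k S_{-k}$ lets one telescope, e.g.
\[
S_{m+n} = S_0 S_{m+n} = S_m S_{-m} S_{m+n} \subseteq S_m S_n,
\]
using $S_{-m} S_{m+n} \subseteq S_n$ and the fact that for this to be an equality one also needs the reverse, which comes from $S_m S_n \subseteq S_{m+n}$; combining the two inclusions gives $S_m S_n = S_{m+n}$.

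The main obstacle is the bookkeeping in the induction: one must be careful that $S_{k+1} = S_1 S_k$ (and its negative-degree and reversed analogues) genuinely follow from the stated hypotheses rather than being assumed, since the hypothesis only directly gives multiplication by $S_0$ and the single pair $S_1 S_{-1}$, $S_{-1} S_1$. The trick throughout is to insert a factor of $S_0 = S_1 S_{-1}$ or $S_0 = S_{-1} S_1$ at a strategic spot, regroup by associativity, and then collapse using the degree-shift hypotheses $S_0 S_n = S_n = S_n S_0$ together with the graded inclusions $S_a S_b \subseteq S_{a+b}$, playing the resulting inclusion off against its reverse to obtain equality. No step requires more than elementary manipulation once this pattern is in place.
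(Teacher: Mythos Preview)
Your proposal is correct and follows essentially the same approach as the paper: both argue by inserting a factor of $S_0 = S_1 S_{-1}$ (or $S_{-1}S_1$) and regrouping, first establishing inductively that $S_{k+1} = S_1 S_k$ and $S_{-(k+1)} = S_{-k}S_{-1}$ (the paper phrases this as $S_m = (S_1)^m$, $S_{-n} = (S_{-1})^n$). The only cosmetic difference is the endgame: the paper splits into four sign cases for $m,n$, whereas you collapse everything into the single line $S_{m+n} = S_m S_{-m} S_{m+n} \subseteq S_m S_n$ after first recording $S_k S_{-k} = S_{-k}S_k = S_0$; this is a minor organizational streamlining of the same argument.
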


\begin{proof}
The ''only if'' statement is immediate.
Now we show the ''if'' statement.
Take positive integers $m$ and $n$.
First we show by induction that $S_m = (S_1)^m$.
The base case $m=1$ is clear. Next suppose that 
$S_m = (S_1)^m$. Then we get that 
$S_{m+1} = S_0 S_{m+1} = S_1 S_{-1} S_{m+1} \subseteq
S_1 S_{-1 + m+1} = S_1 S_m = S_1 (S_1)^m = (S_1)^{m+1} \subseteq S_{m+1}$.
Next we show by induction that $S_{-n} = (S_{-1})^n$.
The base case $n=1$ is clear. Next suppose that 
$S_{-n} = (S_{-1})^n$. Then we get that 
$S_{-n-1} = S_{-n-1} S_0 = S_{-n-1} S_1 S_{-1} \subseteq 
S_{-n-1+1} S_{-1} = S_{-n} S_{-1} = (S_{-1})^n S_{-1} = (S_{-1})^{n+1} \subseteq S_{-n-1}$.

Case 1: $S_m S_n = (S_1)^m (S_1)^n = (S_1)^{m+n} = S_{m+n}$.

Case 2: $S_{-m} S_{-n} = (S_{-1})^m (S_{-1})^n = (S_{-1})^{m+n} = S_{-m-n}$.

Case 3: Now we show that $S_m S_{-n} = S_{m-n}$.
We get that 
$S_m S_{-n} = (S_1)^m (S_{-1})^n$.
By repeated application of the equality $S_1 S_{-1} = S_0$,
we get that $(S_1)^m (S_{-1})^n = (S_1)^{m-n} = S_{m-n}$,
if $m \geq n$, or $(S_1)^m (S_{-1})^n = (S_{-1})^{n-m} = S_{m-n}$, otherwise.

Case 4: $S_{-m} S_n = S_{n-m}$. This is shown in a similar fashion
to Case 3, using the equality $S_{-1} S_1 = S_0$, and is therefore left to the reader.
\end{proof}

\subsection{Leavitt path algebras}\label{Subsec:LPA}

Let $R$ be an associative unital ring and let
$E = (E^0,E^1,r,s)$ be a directed graph.
Recall that 
$r$ (range) and $s$ (source) are maps $E^1 \to E^0$. The elements of $E^0$ are called \emph{vertices} 
and the elements of $E^1$ are called \emph{edges}. 
A vertex $v$ for which $s^{-1}(v)$ is empty is called a \emph{sink}.
A vertex $v$ for which $r^{-1}(v)$ is empty is called a \emph{source}.
If $s^{-1}(v)$ is a finite set for every $v \in E^0$,
then $E$ is called \emph{row-finite}.
If $s^{-1}(v)$ is an infinite set, then $v\in E^0$
is called an \emph{infinite emitter}.
If both $E^0$ and $E^1$ are finite sets, then we say that $E$ is \emph{finite}.
A \emph{path} $\mu$ in $E$ is a sequence of edges 
$\mu = \mu_1 \ldots \mu_n$ such that $r(\mu_i)=s(\mu_{i+1})$ 
for $i\in \{1,\ldots,n-1\}$. In such a case, $s(\mu):=s(\mu_1)$ 
is the \emph{source} of $\mu$, $r(\mu):=r(\mu_n)$ is the \emph{range} 
of $\mu$, 
and $|\mu|:=n$ is the \emph{length} of $\mu$.
For any vertex $v \in E^0$ we put $s(v):=v$ and $r(v):=v$.
The elements of $E^1$ are called \emph{real edges}, while for $f\in E^1$
we call $f^*$ a \emph{ghost edge}.
The set $\{f^* \mid f \in E^1\}$ will be denoted by $(E^1)^*$.
We let $r(f^*)$ denote $s(f)$, and we let $s(f^*)$ denote $r(f)$.
For $n \geq 2$, we define $E^n$ to be the set of paths of length $n$, and
$E^* = \cup_{n\geq 0} E^n$ is the set of all finite paths.
If $\mu = \mu_1 \mu_2 \mu_3 \ldots$
where $\mu_i \in E^1$, for all $i\in \N$,
and $r(\mu_i)=s(\mu_{i+1})$ for all $i\in \N$,
then $\mu$ is said to be an \emph{infinite path}.
The set of all infinite paths is denoted by $E^\infty$.
If $p \in E^* \cup E^\infty$ and some $\alpha \in E^*$, $p' \in E^* \cup E^\infty$
satisfy $p=\alpha p'$,
then $\alpha$ is said to be an \emph{initial subpath of $p$}.

Following Hazrat \cite{H2013A} we make the following definition.

\begin{defn}\label{def:LPA}
The \emph{Leavitt path algebra of $E$ with coefficients in $R$}, denoted by $L_R(E)$,
is the algebra generated by the sets
$\{v \mid v\in E^0\}$, $\{f \mid f\in E^1\}$ and $\{f^* \mid f\in E^1\}$
with the coefficients in $R$,
subject to the relations:
\begin{enumerate}
	\item $uv = \delta_{u,v} v$ for all $u,v \in E^0$;
	\item $s(f)f=fr(f)=f$ and $r(f)f^*=f^*s(f)=f^*$, for all $f\in E^1$;
	\item $f^*f'=\delta_{f,f'} r(f)$, for all $f,f'\in E^1$;
	\item $\sum_{f\in E^1, s(f)=v} ff^* = v$, for every $v\in E^0$ for which $s^{-1}(v)$ is non-empty and finite.
\end{enumerate}
Here the ring $R$ commutes with the generators.
\end{defn}

\begin{remark}\label{rem:Zgradation}
The Leavitt path algebra $L_R(E)$ carries a natural $\Z$-gradation.
Indeed, put $\deg(v)=0$ for each $v\in E^0$.
For each $f\in E^1$ we put $\deg(f)=1$ and $\deg(f^*)=-1$.
By assigning degrees to the generators in this way,
we obtain a $\Z$-gradation on the free algebra $F_R(E) = R \langle	v, f, f^* \mid v \in E^0, f \in E^1 \rangle$.
Moreover, the ideal coming from relations (1)--(4) in Definition~\ref{def:LPA} is homogeneous. Using this it is easy to see that the natural $\Z$-gradation on $F_R(E)$ carries over to a $\Z$-gradation on the quotient algebra $L_R(E)$.
\end{remark}

\subsection{Inverse limits of finite sets}

The following lemma follows from a general result concerning non-empti\-ness of inverse limits
in \cite[Chapter III § 7.4]{bourbaki}. For the convenience of the reader, we give a short direct proof
adapted to
the situation at hand.

\begin{lemma}\label{lemma:handy}
If $(X_n)_{n \in \mathbb{N}}$ is a sequence of finite non-empty sets and for all $n \in \mathbb{N}$,
$g_n$ is a function $X_{n+1} \to X_n$,
then there exists an element $(x_1,x_2,x_3,\ldots) \in \prod_{n \in \mathbb{N}} X_n$ such that for each $n \in \mathbb{N}$ the equality $g_n(x_{n+1}) = x_n$ holds.
\end{lemma}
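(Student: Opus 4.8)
The plan is to reduce to a situation where all the transition maps are surjective, and then build the desired sequence coordinate by coordinate, always choosing coordinates inside the "surviving" subsets. For each pair $m \geq n$ write $g_{n,m} \colon X_m \to X_n$ for the composite $g_n \circ g_{n+1} \circ \cdots \circ g_{m-1}$ (with $g_{n,n} = \id$), and for each $n$ set $Y_n = \bigcap_{m \geq n} g_{n,m}(X_m) \subseteq X_n$. The first step is to observe that $Y_n$ is non-empty: the sets $g_{n,m}(X_m)$, for $m \geq n$, form a decreasing chain of non-empty subsets of the \emph{finite} set $X_n$, hence the chain stabilizes and its intersection $Y_n$ is non-empty.

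Next I would check that $g_n$ restricts to a \emph{surjective} map $Y_{n+1} \to Y_n$. Since $g_n(g_{n+1,m}(X_m)) = g_{n,m}(X_m)$ for every $m \geq n+1$, one sees $g_n(Y_{n+1}) \subseteq Y_n$ directly, and a short argument using finiteness (again: a decreasing chain of non-empty preimages inside a finite set stabilizes) gives the reverse inclusion, i.e.\ every element of $Y_n$ has a preimage in $Y_{n+1}$. Granting this, the construction is immediate: pick any $x_1 \in Y_1$ (non-empty by the previous step), and having chosen $x_n \in Y_n$, use surjectivity of $g_n \colon Y_{n+1} \to Y_n$ to pick $x_{n+1} \in Y_{n+1}$ with $g_n(x_{n+1}) = x_n$. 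This recursively defines an element $(x_1, x_2, x_3, \ldots) \in \prod_{n} X_n$ with $g_n(x_{n+1}) = x_n$ for all $n$, as required.

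The only real subtlety — and the step I expect to be the crux — is the claim that $g_n \colon Y_{n+1} \to Y_n$ is surjective; this is where finiteness of the $X_n$ is genuinely used (the statement is false for infinite sets), and it is worth spelling out the stabilization argument carefully. Everything else is bookkeeping. Note also that the recursive choice of the $x_n$ uses only dependent choice, not the full axiom of choice, which is appropriate given the elementary nature of the statement; I would not belabor this point but it is worth a remark if the paper cares about such distinctions.
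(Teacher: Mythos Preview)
Your proposal is correct and follows essentially the same approach as the paper: your sets $Y_n = \bigcap_{m \geq n} g_{n,m}(X_m)$ are exactly the paper's $Z_n = \bigcap_m Y_n^m$, and both proofs then build the sequence recursively inside these stabilized images. The only cosmetic difference is that the paper establishes $g_n(Z_{n+1}) = Z_n$ in one stroke via the identity $g_n(Y_{n+1}^m) = Y_n^{m+1}$ and a single stabilization index $k = \max(p(n+1),p(n))$, whereas you sketch the surjectivity via a per-element decreasing-preimage argument; both are routine and equivalent.
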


\begin{proof}
We claim that there exists a sequence of sets $(Z_n)_{n \in \mathbb{N}}$ such that
for all $n \in \mathbb{N}$, $Z_n$ is a non-empty subset of $X_n$ and $g_n( Z_{n+1} ) = Z_n$. 
Let us assume for a moment that the claim holds.
Define an element $(x_1, x_2, x_3, \ldots) \in \prod_{n \in \mathbb{N}} Z_n$ inductively in the following way.
Let $x_1$ be any element in $Z_1$. Take $m \in \mathbb{N}$. Suppose that we have defined $x_n \in Z_n$
for all $n \leq m$. Then let $x_{m+1}$ be any element in $g_m^{-1}( x_m ) \cap Z_{m+1}$.
It is clear that the element $(x_1, x_2, x_3,\ldots)$ has the desired properties.
Now we show the claim. For all $m,n \in \mathbb{N}$ put 
\begin{displaymath}
		Y_n^m = ( g_n \circ g_{n+1} \circ \cdots \circ g_{m+n-1} ) ( X_{m+n} ).
\end{displaymath}
Then for all $m,n \in \mathbb{N}$, the set $Y_n^m$ is a finite and non-empty subset of $X_n$, and the relations
\begin{equation}\label{inclusion}
Y_n^{m+1} \subseteq Y_n^m
\end{equation}
and
\begin{equation}\label{relation}
g_n( Y_{n+1}^m ) = Y_n^{m+1}
\end{equation}
hold. For all $n \in \mathbb{N}$ put $Z_n = \bigcap_{m \in \mathbb{N}} Y_n^m$.
From \eqref{inclusion} it follows that every $Z_n$ is a finite and non-empty subset of $X_n$.
In fact, for all $n \in \mathbb{N}$, there is $p(n) \in \mathbb{N}$ with the property that
for all $k \geq p(n)$, the equalities $Y_n^k = Y_n^{p(n)} = Z_n$ hold.
Take $n \in \mathbb{N}$ and $k = {\rm max}( p(n+1) , p(n) )$. 
Then, from \eqref{relation}, we get that
\begin{displaymath}
		g_n( Z_{n+1} ) = g_n( Y_{n+1}^{p(n+1)} ) = g_n( Y_{n+1}^k ) = Y_n^{k+1} = Z_n
\end{displaymath}
which shows the claim.
\end{proof}

\section{Necessary conditions}\label{Sec:Nec}

In this section we will prove Lemma~\ref{lemma:nec}
which establishes the implication (i)$\Rightarrow$(ii) in Theorem~\ref{thm:main}.

\begin{lemma}\label{lemma:nec}
Let $R$ be a unital ring and let $E$ be a directed graph.
Consider the Leavitt path algebra $L_R(E)$ with its canonical $\Z$-gradation.
If $L_R(E)$ is strongly $\Z$-graded, then the following three assertions hold:
\begin{enumerate}[{\rm (i)}]
	\item $E$ has no sink;
	\item $E$ is row-finite;
	\item $E$ satisfies Condition (Y).
\end{enumerate}
\end{lemma}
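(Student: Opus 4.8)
The plan is to prove the contrapositive of each implication, using the characterization of strong gradation from Proposition~\ref{prop:stronglyZgraded}. Since $L_R(E)_0 L_R(E)_n = L_R(E)_n = L_R(E)_n L_R(E)_0$ holds automatically (the homogeneous components of degree $0$ contain the vertex idempotents, and every spanning monomial of degree $n$ starts and ends at a vertex), the content of strong gradation is concentrated in the equalities $L_R(E)_1 L_R(E)_{-1} = L_R(E)_0 = L_R(E)_{-1} L_R(E)_1$. So it suffices to produce, for each violated condition, an element of $L_R(E)_0$ (or of $L_R(E)_{-1} L_R(E)_1$) that cannot lie in the relevant product. In all three cases I would exhibit a vertex $v \in E^0$ and argue that $v \notin L_R(E)_1 L_R(E)_{-1}$ (which would already contradict $L_R(E)_0 \subseteq L_R(E)_1 L_R(E)_{-1}$).

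First I would handle (i): if $v$ is a sink, then relations~(2)--(4) of Definition~\ref{def:LPA} show that no monomial ending in $v$ of the form $\mu \nu^*$ with $|\mu| = |\nu| + 1$ can contribute to $v$ — more carefully, using the normal form for $L_R(E)$ (monomials $\alpha \beta^*$ with $\alpha, \beta \in E^*$, $r(\alpha) = r(\beta)$, reduced so that $\alpha$ does not end with an edge $f$ with $\beta$ ending with the same $f$), one sees that any degree-$1$ times degree-$(-1)$ product landing in the "corner" $vL_R(E)v$ must involve a real edge emitted from $v$; since $s^{-1}(v) = \emptyset$, this is impossible, so $v \notin L_R(E)_1 L_R(E)_{-1}$. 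For (ii): if $v$ is an infinite emitter, the same corner argument shows that writing $v$ as a sum of products $f g^*$ would, after applying the CK2-type relation only available for finite emitters, force a finite relation among infinitely many orthogonal idempotents $ff^*$, which is impossible — again $v \notin L_R(E)_1 L_R(E)_{-1}$.

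The main work, and the main obstacle, is (iii): suppose $E$ is row-finite with no sink but fails Condition~(Y), so there is $k \in \N$ and an infinite path $p = \mu_1 \mu_2 \cdots$ such that for every initial subpath $\alpha$ of $p$ and every finite path $\beta$ with $r(\beta) = r(\alpha)$ we have $|\beta| - |\alpha| \neq k$. I want to conclude that $L_R(E)$ is not strongly graded; the cleanest target is to show $L_R(E)_0 \neq L_R(E)_k L_R(E)_{-k}$ — note that by Proposition~\ref{prop:stronglyZgraded} strong gradation forces $L_R(E)_0 = (L_R(E)_1)^k (L_R(E)_{-1})^k \subseteq L_R(E)_k L_R(E)_{-k} \subseteq L_R(E)_0$, so failure of $L_R(E)_0 = L_R(E)_k L_R(E)_{-k}$ rules out strong gradation. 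Concretely, let $v_n = r(\mu_n) = s(\mu_{n+1})$ be the vertices along $p$; I claim no $v_n$ lies in $L_R(E)_k L_R(E)_{-k}$. An element of $L_R(E)_k$ supported at $v_n$ on the right is an $R$-combination of monomials $\gamma \delta^*$ with $|\gamma| - |\delta| = k$ and $r(\gamma\delta^*) = v_n$, i.e. $r(\delta) = v_n$, so $\delta$ is a finite path into $v_n$ with $|\delta|$ arbitrary but $|\gamma| = |\delta| + k$; similarly $L_R(E)_{-k}$ contributes $\eta \zeta^*$ with $|\zeta| - |\eta| = k$. Using row-finiteness to make all the CK2 expansions finite, and the reduced normal form, one shows by an orthogonality/degree bookkeeping argument that $v_n \in L_R(E)_k L_R(E)_{-k}$ would require the existence, for some $m$, of a finite path $\beta$ with $r(\beta) = r(\mu_m) = v_m$ — an initial vertex along $p$ — and $|\beta| = m + k$, i.e. $|\beta| - |\alpha| = k$ for the initial subpath $\alpha = \mu_1 \cdots \mu_m$ of $p$; this contradicts the failure of Condition~(Y). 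The delicate point is controlling the reduced normal form after multiplying out all the CK2 relations along the prefix of $p$ — this is exactly where Lemma~\ref{lemma:handy} may be needed in the converse direction, but for necessity a direct finite argument along the infinite path should suffice, the key being that the coefficient of each vertex idempotent $v_n$ in any product $L_R(E)_k L_R(E)_{-k}$ is a sum indexed by certain pairs of paths through $v_n$ whose length difference is exactly $k$. I expect the bookkeeping of which monomials survive the CK2 reductions to be the fiddliest part of the write-up.
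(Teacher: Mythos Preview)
Your treatments of (i) and (ii) are correct in spirit, though considerably heavier than what the paper does. The paper argues directly rather than by contrapositive: assuming $S=L_R(E)$ is strongly graded, for a sink $v$ one has $v=v^2\in vS_1S_{-1}=\{0\}$ (since every degree-$1$ monomial begins with a real edge, which $v$ kills), and for (ii) one writes $v=\sum_{i=1}^n \alpha_i\beta_i^*\gamma_i\delta_i^*\in S_1S_{-1}$ with $|\delta_i|\geq 1$, and then observes that $f=vf$ can be nonzero for at most $n$ distinct edges $f\in s^{-1}(v)$ because $\delta_i^*f\neq 0$ forces $f$ to be the first edge of $\delta_i$. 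No normal-form bookkeeping is needed.

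Your argument for (iii), however, has a genuine gap: you have chosen the wrong product. Under the hypotheses you yourself impose (row-finite, no sink), relation (4) iterated gives $v=\sum_{\alpha\in E^k,\,s(\alpha)=v}\alpha\alpha^*\in S_kS_{-k}$ for \emph{every} vertex $v$ and every $k\geq 1$; hence $S_kS_{-k}=S_0$ automatically, and your claim that ``no $v_n$ lies in $L_R(E)_kL_R(E)_{-k}$'' is simply false. (There is also a slip in your description of ``supported at $v_n$ on the right'': that condition is $s(\delta)=v_n$, not $r(\delta)=v_n$.) The obstruction coming from failure of Condition~(Y) lives in the \emph{other} product $S_{-k}S_k$.

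The paper's proof of (iii) is again direct and avoids all the reduced-normal-form bookkeeping you anticipate. Given an infinite path $p$ and $k\in\N$, set $v=s(p)$ and use $v\in S_0=S_{-k}S_k$ to write $v=\sum_{i=1}^n\alpha_i\beta_i^*\gamma_i\delta_i^*$ with $\alpha_i\beta_i^*\in S_{-k}$ and $\gamma_i\delta_i^*\in S_k$. Choose an initial subpath $p'$ of $p$ longer than every $\delta_i$; then $p'=vp'$ forces some $\delta_m$ to be an initial subpath of $p$ (else every $\delta_i^*p'$ vanishes). Now $\alpha:=\delta_m$ and $\beta:=\gamma_m$ satisfy $r(\beta)=r(\alpha)$ and $|\beta|-|\alpha|=k$, verifying Condition~(Y). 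The only ``surviving monomial'' analysis needed is the single observation that $\delta^*p'\neq 0$ with $|p'|>|\delta|$ implies $\delta$ is an initial subpath of $p'$; Lemma~\ref{lemma:handy} plays no role here.
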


\begin{proof}
Suppose that $S=L_R(E)$ is strongly $\Z$-graded.

(i)
Seeking a contradiction, suppose that there is a sink $v$ in $E$.
Then $v \in S_0 = S_1 S_{-1}$.
Using that $v$ is a sink, we get that $v = v^2 \in v S_1 S_{-1} = \{0\}$.
This is a contradiction.

(ii)
Let $v \in E^0$ be an arbitrary vertex.
From the strong gradation we get that $v \in S_0 = S_1 S_{-1}$, i.e. $v = \sum_{i=1}^n \alpha_i \beta_i^* \gamma_i \delta_i^*$ where $\alpha_i \beta_i^* \in S_1$ and $\gamma_i \delta_i^* \in S_{-1}$.
Notice that $|\delta_i|>0$ for each $i$.
Seeking a contradiction, suppose that $v$ is an infinite emitter.
Then 
$f = vf = \sum_{i=1}^n \alpha_i \beta_i^* \gamma_i \delta_i^* f$
for infinitely many $f$'s.
But that is not possible since $n < \infty$.
This is a contradiction.
We conclude that $E$ is row-finite.

(iii)
Let $p$ be an infinite path and let $k>0$ be an arbitrary integer.
Put $v=s(p)$.
Using that $S_0 = S_{-k}S_{k}$ we may write
\begin{displaymath}
	v = \sum_{i=1}^{n} \alpha_i \beta_i^* \gamma_i \delta_i^*
\end{displaymath}
where $\alpha_i \beta_i^* \in S_{-k}$ and $\gamma_i \delta_i^* \in S_{k}$.
Let $p'$ be an initial subpath of $p$ such that $|p'|>|\delta_i|$ for each $i$.
Clearly, $vp'=s(p')p'=p'$. Hence there must be some $m$ such that $\delta_m$ is an initial subpath of $p'$ (and thus also of $p$), for otherwise we would have ended up with $vp'=0$.
Using the notation of Definition~\ref{def:CondY}),
put $\alpha := \delta_m$ and $\beta := \gamma_m$
and notice that
$r(\delta_m)=r(\gamma_m)$ and $|\gamma_m|-|\delta_m|=k$.
This shows that $E$ satisfies Condition (Y).
\end{proof}

\section{Sufficient conditions}\label{Sec:Suff}

In this section we will prove Proposition~\ref{prop:suff} 
which establishes the implication (iii)$\Rightarrow$(i) in Theorem~\ref{thm:main}.

\begin{lemma}\label{lemma:suff1}
Let $R$ be a unital ring and let $E$ be a directed graph.
Consider the Leavitt path algebra $S=L_R(E)$ with its canonical $\Z$-gradation.
If $E$ is row-finite and has no sink, then $S_1 S_{-1} = S_0$.
\end{lemma}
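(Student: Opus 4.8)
The plan is to show the nontrivial inclusion $S_0 \subseteq S_1 S_{-1}$, since the reverse inclusion is automatic from the grading axioms. It suffices to show that each homogeneous generator of $S_0$ of the form $\mu \nu^*$ with $|\mu| = |\nu|$ (these span $S_0$ as an $R$-module) lies in $S_1 S_{-1}$. First I would reduce further: using relation (3), any such element can be written using the vertex $v = r(\mu) = r(\nu)$, namely $\mu \nu^* = \mu v \nu^* = (\mu v)(v \nu^*)$. So the crux is to show that every vertex $v \in E^0$ lies in $S_1 S_{-1}$, because then $\mu \nu^* = (\mu v)(v \nu^*) \in (S_{|\mu|} S_1)(S_{-1} S_{-|\nu|}) \subseteq S_1 S_{-1}$ after inserting the factorization of $v$ — more precisely, writing $v = \sum_i f_i g_i^*$ with $f_i \in E^1$, $g_i \in E^1$, $s(f_i) = s(g_i) = v$, one gets $\mu\nu^* = \sum_i (\mu f_i)(g_i^* \nu^*)$ where $\mu f_i \in S_{|\mu|+1}$ and $g_i^*\nu^* \in S_{-|\nu|-1}$; this still is not obviously in $S_1 S_{-1}$ unless $|\mu| = 0$. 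So I must be more careful.

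The correct reduction is the following: it is enough to prove that $v \in S_1 S_{-1}$ for every $v \in E^0$. Granting that, one has $S_0 = S_0 S_0 \supseteq (\sum_v R v) S_0$; more usefully, for a spanning element $\mu\nu^*$ with $|\mu|=|\nu|=n \geq 1$, write $\mu = \mu_1 \mu' $ and $\nu = \nu_1 \nu'$ where $\mu_1, \nu_1 \in E^1$ and $|\mu'| = |\nu'| = n-1$; then $\mu\nu^* = \mu_1 (\mu' \nu'^*) \nu_1^*$. Now $\mu'\nu'^* \in S_0$, and by induction on $n$ we may assume $\mu'\nu'^* \in S_1 S_{-1}$, say $\mu'\nu'^* = \sum_j a_j b_j$ with $a_j \in S_1$, $b_j \in S_{-1}$. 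Then $\mu\nu^* = \sum_j (\mu_1 a_j)(b_j \nu_1^*)$ with $\mu_1 a_j \in S_2$ and $b_j \nu_1^* \in S_{-2}$ — again wrong degree. This shows the naive inductive bracketing fails, and the real content is genuinely that $S_0 \subseteq S_1 S_{-1}$ requires handling arbitrary degree-zero monomials directly.

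The cleanest route: show $v \in S_1 S_{-1}$ for each vertex $v$, and separately show $S_1 S_{-1}$ is a \emph{two-sided ideal} of $S_0$, or at least that $S_0 (S_1 S_{-1}) S_0 \subseteq S_1 S_{-1}$ — actually, the key algebraic fact is that $S_0 S_1 \subseteq S_1$ and $S_{-1} S_0 \subseteq S_{-1}$ trivially from the grading, so $S_0(S_1 S_{-1})S_0 = (S_0 S_1)(S_{-1}S_0) \subseteq S_1 S_{-1}$. Hence $S_1 S_{-1}$ is an ideal of the ring $S_0$. Therefore, if I can show that the set $\{v : v \in E^0\}$ generates $S_0$ as a two-sided ideal over itself — equivalently that $S_0 = S_0 \cdot (\sum_v Rv) \cdot S_0$ — combined with each $v \in S_1 S_{-1}$, I am done. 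But $\sum_v Rv$ need not generate $S_0$ as an ideal in general. So instead I will prove the stronger pointwise statement: for every finite path $\lambda$ with $r(\lambda) = v$, the element $\lambda \lambda^* \in S_1 S_{-1}$, and then observe that a general spanning element $\mu\nu^*$ of $S_0$ satisfies $\mu\nu^* = (\mu\mu^*)(\mu\nu^*) $ hmm — rather $\mu\nu^* = \mu(\nu^*) = (\mu\nu^*)$, and $\mu\nu^* = \mu\mu^* \cdot \mu\nu^* \in (S_1 S_{-1}) S_0 \subseteq S_1 S_{-1}$ using relation (3) that $\mu^*\mu = r(\mu) = v$, so indeed $\mu\mu^* \cdot \mu\nu^* = \mu (\mu^*\mu) \nu^* = \mu v \nu^* = \mu\nu^*$.

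So the real plan is: \textbf{Step 1.} Show every vertex $v \in E^0$ lies in $S_1 S_{-1}$: since $E$ has no sink and is row-finite, $s^{-1}(v)$ is finite and nonempty, so relation (4) gives $v = \sum_{f \in s^{-1}(v)} f f^* \in S_1 S_{-1}$. \textbf{Step 2.} Show for every finite path $\lambda$ that $\lambda\lambda^* \in S_1 S_{-1}$: writing $v = r(\lambda)$ and using Step 1, $v = \sum_j a_j b_j$ with $a_j \in S_1, b_j \in S_{-1}$, so $\lambda \lambda^* = \lambda v \lambda^* = \sum_j (\lambda a_j)(b_j \lambda^*)$; this has $\lambda a_j \in S_{|\lambda|+1}$, not $S_1$ — so this still fails for $|\lambda|\geq 1$. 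The honest conclusion is that the only clean statement is Step 1 plus the identity $\mu\nu^* = (\text{degree-0 stuff})$, and one genuinely needs induction where the inductive hypothesis is applied to a \emph{shorter} degree-zero element sitting \emph{inside}. Concretely: \textbf{Step 3 (induction on $|\mu|$).} For $\mu\nu^*$ with $|\mu|=|\nu|=n$: if $n=0$ this is $v \in S_1 S_{-1}$ by Step 1. If $n \geq 1$, factor $\mu = e\mu'$, $\nu = e'\nu'$ with $e,e' \in E^1$, $s(e)=s(e')=s(\mu)=:w$ — but $e$ need not equal $e'$. When $e \ne e'$ we still have $\mu\nu^* = e(\mu'\nu'^*)e'^*$; note $r(e) = s(\mu') = r(\mu'\ldots)$... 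The main obstacle, which I expect to be the heart of the argument, is precisely combining: the degree-$0$ monomial $\mu\nu^*$ equals $\mu\mu^*\,\mu\nu^*$, and $\mu\mu^* = (ee^*)\cdot(\text{rest})$ where $ee^* \in S_1 S_{-1}$ while the ``rest'' $= e(\mu''\mu''^*)e^* \cdot$ — one peels \emph{from the outside}, writing $\mu\mu^* = e \mu_{\geq 2}\mu_{\geq 2}^* e^*$ and noting $e \mu_{\geq 2}\mu_{\geq 2}^* e^* = (e\mu_{\geq 2}\cdots)$; the correct device, I believe, is: $\mu\nu^* = \sum_{g \in s^{-1}(s(\mu))} (g g^* \mu)(\nu^*) $? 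No. Let me just commit to the approach that I expect the authors use: \textbf{reduce to vertices via relation (4) applied at $s(\mu)$}, namely $\mu\nu^* = (s(\mu))\mu\nu^* = \big(\sum_{g \in s^{-1}(s(\mu))} g g^*\big)\mu\nu^* = \sum_g g (g^* \mu)\nu^*$, where $g^*\mu = \delta_{g,\mu_1} \mu'$ collapses the leading edge, reducing $n$ to $n-1$; thus $\mu\nu^* = \mu_1 (\mu' \nu^*)$ and $\mu'\nu^*$ is \emph{not} degree zero (it has degree $-1$), so $\mu'\nu^* \in S_{-1}$ and $\mu_1 \in S_1$, giving $\mu\nu^* \in S_1 S_{-1}$ directly, \emph{for $n \geq 1$}, with no induction needed at all! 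So the final clean plan is: the $R$-span of $\{\mu\nu^* : |\mu|=|\nu|\}$ is $S_0$; for $|\mu| = |\nu| = n \geq 1$, apply relation (4) at the vertex $s(\mu)$ (legitimate since $E$ is row-finite with no sink, so $s^{-1}(s(\mu))$ is finite nonempty) to get $\mu = \mu_1\mu'$, $\mu\nu^* = \mu_1(\mu'\nu^*) \in S_1 S_{-1}$; for $n = 0$, $\mu\nu^* = v \in S_1 S_{-1}$ again by relation (4) at $v$. Hence $S_0 \subseteq S_1 S_{-1}$, and the reverse inclusion is immediate. The main subtlety to get right in the writeup is simply justifying that such monomials span $S_0$ and that relation (4) applies (needs both row-finiteness and no-sink — exactly the hypotheses), which is why those hypotheses are necessary here.
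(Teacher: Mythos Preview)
Your final plan is correct: the monomials $\mu\nu^*$ with $|\mu|=|\nu|$ span $S_0$, and for $|\mu|\geq 1$ the trivial factorisation $\mu\nu^* = \mu_1\cdot(\mu'\nu^*)$ already exhibits the element as a product in $S_1 S_{-1}$ (no appeal to relation~(4) is needed here, only for the base case $|\mu|=0$).

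However, your route is not the one the paper takes --- and in fact you considered and then wrongly discarded the paper's argument. The paper observes, exactly as you did, that $S_1 S_{-1}$ is a two-sided ideal of $S_0$ via $S_0 S_1 \subseteq S_1$ and $S_{-1} S_0 \subseteq S_{-1}$. It then uses the fact that finite sums of vertices form a set of \emph{local units} for $L_R(E)$, hence for $S_0$: every $x\in S_0$ satisfies $x = ux$ for some finite sum $u$ of vertices. Thus once $E^0\subseteq S_1 S_{-1}$ (immediate from relation~(4)), the ideal property gives $x = ux \in (S_1 S_{-1})S_0 \subseteq S_1 S_{-1}$. Your claim that ``$\sum_v Rv$ need not generate $S_0$ as an ideal'' is therefore mistaken: the local-unit property makes this automatic. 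The paper's argument is shorter and more structural; yours is more hands-on and avoids invoking local units, at the cost of needing the explicit spanning set for $S_0$.
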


\begin{proof}
Clearly, $S_1 S_{-1} \subseteq S_0$.
Notice that $S_1 S_{-1}$ is an ideal of $S_0$,
and that certain sums of elements of $E^0$ 
form a set of local units for $L_R(E)$, and in particular for $S_0$.
Thus, if we can show that $E^0 \subseteq S_1 S_{-1}$,
then it follows that $S_0 \subseteq S_1 S_{-1}$.
Now take $v\in E^0$.
Then $v = \sum_{s(f)=v} ff^* \in S_1 S_{-1}$.
\end{proof}

\begin{defn}
If $\alpha \in E^n$, for some $n\in \N$, then we say that 
$r(\alpha)$ is a \emph{turning node} for $\alpha$ if there exists
$\beta \in E^{n+1}$ with $r(\alpha) = r(\beta)$.
\end{defn}

\begin{prop}\label{prop:suff}
Let $R$ be a unital ring and let $E$ be a directed graph.
Consider the Leavitt path algebra $L_R(E)$ with its canonical $\Z$-gradation.
If $E$ is row-finite, has no sink, and satisfies Condition (Y1),
then $L_R(E)$ is strongly $\Z$-graded. 
\end{prop}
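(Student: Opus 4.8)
The plan is to apply Proposition~\ref{prop:stronglyZgraded}, so it suffices to establish that $S_0 S_n = S_n = S_n S_0$ for all $n \in \Z$ together with the two equalities $S_1 S_{-1} = S_{-1} S_1 = S_0$. The first family of equalities is easy: the local units built from finite sums of vertices in $E^0$ act as identities on any homogeneous element, so $S_0 S_n = S_n = S_n S_0$ holds without any hypothesis on $E$. The equality $S_1 S_{-1} = S_0$ is precisely Lemma~\ref{lemma:suff1}, which only needs $E$ row-finite with no sink. Hence the entire content of the proposition is concentrated in proving $S_{-1} S_1 = S_0$; equivalently (using the local-unit argument once more) it is enough to show that $v \in S_{-1} S_1$ for every vertex $v \in E^0$.

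So fix $v \in E^0$; I want to write $v$ as a finite sum of products $\gamma^* \alpha \cdot \beta^* \delta$ with $|\gamma^* \alpha| = -1$ and $|\beta^* \delta| = 1$, i.e.\ as a sum of terms of the form $\gamma^* \eta$ where $\eta$ is a path with $|\eta| = |\gamma| + 1$ and $s(\gamma) = s(\eta) = v$. In other words, I must produce, for each $v$, finitely many pairs of paths $\gamma_i, \eta_i$ starting at $v$ with $|\eta_i| - |\gamma_i| = 1$ and $r(\gamma_i) = r(\eta_i)$ (a turning node for $\gamma_i$), such that $\sum_i \gamma_i^* \eta_i = v$ in $L_R(E)$. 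The natural candidate comes from iterating relation~(4): starting from $v = \sum_{s(f)=v} f f^*$, one keeps expanding each terminal vertex via relation~(4) to push the "split" further out along paths, and the key algebraic identity to exploit is that for any path $\mu$ one has $\mu = \mu r(\mu) = \mu \sum_{s(g) = r(\mu)} g g^* = \sum_g \mu g (\mu g)^* \mu$, which after relating $\mu^*$ to the appropriate ghost segment lets one trade degree. Concretely the cleanest route is: for a vertex $v$, because $E$ has no sink one can always extend, and the plan is to build a finite ``stopping-time'' tree of paths out of $v$ whose leaves $\alpha$ each sit at a turning node, then set $\gamma_\alpha = \alpha$ and $\eta_\alpha = \alpha e_\alpha$ for a chosen edge $e_\alpha$ with $s(e_\alpha) = r(\alpha)$, and verify $\sum_\alpha \alpha^* \alpha e_\alpha \cdots = v$ after also accounting for the non-turning leaves.

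The main obstacle, and the place where Condition (Y1) enters, is guaranteeing that this tree is \emph{finite}: along every branch one must eventually reach a vertex that is a turning node (so that the degree-shifting construction can terminate there), and if some branch never reached a turning node it would describe an infinite path $p$ out of $v$ none of whose initial subpaths $\alpha$ admits a path $\beta$ with $r(\beta) = r(\alpha)$ and $|\beta| - |\alpha| = 1$ — exactly the negation of Condition (Y1). Row-finiteness ensures each vertex emits only finitely many edges, so a tree all of whose branches are finite is itself finite by K\H{o}nig's lemma; this is where Lemma~\ref{lemma:handy} (non-emptiness of an inverse limit of finite non-empty sets) does its work, producing the forbidden infinite path from an infinite branch. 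I would therefore organize the proof as: (1) reduce to $v \in S_{-1} S_1$ via Proposition~\ref{prop:stronglyZgraded} and Lemma~\ref{lemma:suff1}; (2) for each $v$, use Condition (Y1), row-finiteness and the absence of sinks, together with Lemma~\ref{lemma:handy}, to produce a finite set of paths $\{\alpha_i\}$ out of $v$ such that every initial subpath of every infinite path from $v$ has one of the $\alpha_i$ as an initial subpath and each $\alpha_i$ terminates at a turning node; (3) carry out the Leavitt-relation bookkeeping to check that the corresponding sum $\sum_i \alpha_i^* (\alpha_i e_i)$ (with edges $e_i$ chosen at the turning nodes, and after expanding via relation~(4) so that the $\alpha_i$ exhaust $v = \sum \alpha_i \alpha_i^*$) equals $v$, hence lies in $S_{-1} S_1$. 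Step~(2), the finiteness of the stopping tree, is the heart of the matter; steps~(1) and~(3) are formal.
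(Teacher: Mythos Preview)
Your reduction (step~(1)) and your compactness argument (step~(2)) are correct and are exactly the paper's approach: reduce via Proposition~\ref{prop:stronglyZgraded} and Lemma~\ref{lemma:suff1} to showing $v\in S_{-1}S_1$ for every vertex, then build the tree of paths out of $v$ that have not yet reached a turning node and use Lemma~\ref{lemma:handy} (K\H{o}nig) together with Condition~(Y1) to force this tree to be finite.

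The genuine gap is in step~(3), which you call ``formal'' but have not set up correctly. Every expression you write down for $v$ lies in $S_1$, not in $S_0$: both $\sum_i \gamma_i^*\eta_i$ (with $|\eta_i|-|\gamma_i|=1$) and $\sum_i \alpha_i^*(\alpha_i e_i)=\sum_i e_i$ have degree~$1$, so they cannot equal $v$. The underlying confusion is in how the turning-node data is used. A turning node for $\alpha$ gives you a path $\beta$ of length $|\alpha|+1$ with $r(\beta)=r(\alpha)$; it does \emph{not} give you an edge $e$ with $s(e)=r(\alpha)$, and $\beta$ need not start at $v$ (or anywhere in particular). The point is rather that $\beta^*\beta=r(\beta)=r(\alpha)$, so once you have the stopping-tree expansion $v=\sum_i \alpha_i\alpha_i^*$ with each $r(\alpha_i)$ a turning node for $\alpha_i$, you insert this to get
\[
v=\sum_i \alpha_i\,r(\alpha_i)\,\alpha_i^*=\sum_i (\alpha_i\beta_i^*)(\beta_i\alpha_i^*)\in S_{-1}S_1,
\]
since $\alpha_i\beta_i^*\in S_{|\alpha_i|-|\beta_i|}=S_{-1}$ and $\beta_i\alpha_i^*\in S_1$. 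This is the decomposition the paper uses; with it, step~(3) really is formal.
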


\begin{proof}
Put $S=L_R(E)$.
Notice that, by Lemma~\ref{lemma:suff1}, $S_0 = S_1 S_{-1}$.
Using that $S$ has a set of local units which is contained in $S_0$,
it is clear that $S_0 S_n = S_n = S_n S_0$ for every $n\in \Z$.
In view of Proposition~\ref{prop:stronglyZgraded} it remains to show that $S_0 = S_{-1} S_1$.
By the same argument as in the proof of Lemma~\ref{lemma:suff1},
it is sufficient to show that $E^0 \subseteq S_{-1} S_1$.
Clearly, any vertex $v\in E^0$ which is not a source belongs to $S_{-1} S_1$, since $v=r(f)=f^*f$ for some $f\in E^1$.
Thus, in order to show that $L_R(E)$ is strongly $\Z$-graded it remains to show that $v\in S_{-1} S_1$ for every source $v$.

Suppose that $v$ is a source. Using $v$ we will now inductively define
a sequence $(X_n)_{n \in \mathbb{N}}$ of sets in the following way.
Put
\begin{displaymath}
	X_1 = \{ f \in E^1 \mid s(f)=v, \mbox{and $r(f)$ is not a turning node for $f$} \}
\end{displaymath}
and notice that it is a finite set since $E$ is row-finite.
Suppose that we have defined the finite set $X_n \subseteq E^n$ for some $n \in \mathbb{N}$.
Put 
\begin{align*}
	X_{n+1} = \{ \alpha f \in E^{n+1} \mid & \alpha \in X_n, f \in E^1, s(f)=r(\alpha), \\
	& \text{ and } r(f) \text{ is not a turning node for } \alpha f \}.
\end{align*}
By finiteness of $X_n$ and row-finiteness of $E$ we conclude that $X_{n+1}$ is finite.
Seeking a contradiction, suppose that $X_n$ is non-empty for every $n \in \mathbb{N}$.
For each $n \in \mathbb{N}$, we define a function $g_n : X_{n+1} \to X_n$ by putting
$g_n(\alpha f)=\alpha$, for $\alpha f \in X_{n+1}$.
By Lemma~\ref{lemma:handy}, 
there exists an element $(x_1,x_2,x_3,\ldots) \in \prod_{n \in \mathbb{N}} X_n$ such that for all $n \in \mathbb{N}$ the equality $g_n(x_{n+1}) = x_n$ holds.
In other words, $x_1 x_2 x_3 \cdots$ is an infinite path in $E$
such that $s(x_1)=v$ and with the property that
for all $n \in \mathbb{N}$, $r(x_n)$ is not a turning node
for $x_1 x_2 \cdots x_n$. This contradicts Condition (Y1).
Therefore, for some $n \in \mathbb{N}$, the set $X_n$ is empty. 
Put $k = \min\{n \in \N \mid X_n =\emptyset \}$.

We claim that
$v=\sum_{i=1}^m \alpha_i \alpha_i^*$
for some $m\in \N$ and $\alpha_1,\ldots,\alpha_m \in E^*$
such that, for each $i\in \{1,\ldots,m\}$, $r(\alpha_i)$ is a turning node for $\alpha_i$.
If we assume that the claim holds,
then for each $i\in \{1,\ldots,m\}$ there is some $\beta_i \in E^*$
such that $r(\alpha_i)=r(\beta_i)$ and $|\beta_i|-|\alpha_i|=1$.
Thus,
\begin{displaymath}
	v=\sum_{i=1}^m \alpha_i \alpha_i^*
	= \sum_{i=1}^m \alpha_i r(\alpha_i) \alpha_i^*
	= \sum_{i=1}^m \alpha_i \beta_i^* \beta_i \alpha_i^* \in S_{-1} S_1
\end{displaymath}
as desired.

Now we show the claim.
Using that $E$ is row-finite and that $v$ is not a sink, we may write
\begin{equation}
	v = \sum_{i=1}^{m'} f_i f_i^*
\label{eq:vSum}
\end{equation}
with $\{f_1,\ldots,f_{m'}\} = s^{-1}(v)$.
If, for some $i$, $r(f_i)$ is not a turning node for $f_i$,
then we may replace $f_i f_i^* = f_i r(f_i) f_i^* = f_i \sum_{h\in s^{-1}(r(f_i))} hh^* f_i^*$
in Equation~\eqref{eq:vSum}.
By repeating this procedure (if necessary) it is clear that
we, in a finite number of steps, will be able to identify $m\in \N$ and $\alpha_1,\ldots,\alpha_m \in E^*$
such that $|\alpha_i|\leq k$,
and with the properties that
$v=\sum_{i=1}^m \alpha_i \alpha_i^*$
and for each $i\in \{1,\ldots,m\}$, $r(\alpha_i)$ is a turning node for $\alpha_i$.
\end{proof}

\section{Examples}\label{sec:Examples}

In this section, we illustrate Condition (Y) with some concrete examples of (directed) graphs.
If the graph does not contain any infinite paths, for instance if it is finite, then it trivially satisfies Condition (Y).
Therefore, we will, from now on, only consider examples of graphs containing infinite paths.

\subsection{Examples of directed graphs which satisfy Condition (Y)}

As noted in \cite[Section 4.1.2]{RigbyEtAl}, if $E$ is a graph such that every infinite path contains a 
vertex that is the base of a cycle, then $E$ satisfies Condition (Y).
Here is an example of such a graph: 
\\
\begin{displaymath}
	\xymatrix{
A: & \bullet\ar@(ur,ul)\ar[r]  & \bullet\ar@(ur,ul)\ar[r] & \bullet\ar@(ur,ul)\ar[r] & \bullet\ar@(ur,ul)\ar[r]
& \bullet\ar@(ur,ul)\ar[r] & \bullet\ar@(ur,ul)\ar[r] & \cdots \\
	}	
\end{displaymath}
It is clear that we can erase as many loops as we wish in the graph $A$ and still get a graph satisfying Condition (Y),
as long as we keep infinitely many loops. For instance, the graph
\\
\begin{displaymath}
	\xymatrix{
B: & \bullet\ar[r]  & \bullet\ar@(ur,ul)\ar[r] & \bullet\ar[r] & \bullet\ar@(ur,ul)\ar[r]
& \bullet\ar[r] & \bullet\ar@(ur,ul)\ar[r] & \cdots \\
	}	
\end{displaymath}
satisfies Condition (Y). There are graphs without cycles which still satisfy Condition (Y).
The most ``generic'' example is probably the following graph:
\\
\begin{displaymath}
	\xymatrix{
C: & \cdots\ar[r]  & \bullet\ar[r] & \bullet\ar[r] & \bullet\ar[r] & \bullet\ar[r] & \bullet\ar[r] & \cdots \\
	}	
\end{displaymath}
We can also construct ``left finite'' versions of the graph $C$ which still satisfy Condition (Y)
if we add sufficiently long sequences of vertical arrows at each step, for instance:
\\
\begin{displaymath}
	\xymatrix{
D: & \bullet\ar[r]  & \bullet\ar[r]   & \bullet\ar[r]   & \bullet\ar[r]   & \bullet\ar[r]   & \bullet\ar[r]      & \cdots  \\
   &                & \bullet\ar[u]^2 & \bullet\ar[u]^4 & \bullet\ar[u]^6 & \bullet\ar[u]^8 & \bullet\ar[u]^{10} & \\ 
	}	
\end{displaymath}
To ease the notation we let the above numbers $2, 4, 6, \ldots$ indicate that there are $2, 4, 6,\ldots$ vertical arrows in the respective indicated column, forming line paths of length $2, 4, 6$ and so on.
We can also make the graph $D$ ``thinner'' and still make it satisfy Condition (Y), for instance:
 \\
\begin{displaymath}
	\xymatrix{
E: & \bullet\ar[r]  & \bullet\ar[r]   & \bullet\ar[r] & \bullet\ar[r]   & \bullet\ar[r] & \bullet\ar[r]      & \cdots  \\
   &                & \bullet\ar[u]^2 &               & \bullet\ar[u]^5 &               & \bullet\ar[u]^8 & \\ 
	}	
\end{displaymath}
We can construct an ``odd'' version of the graph $D$ which satisfies Condition (Y):
\\
\begin{displaymath}
	\xymatrix{
F: & \bullet\ar[r]  & \bullet\ar[r]   & \bullet\ar[r]   & \bullet\ar[r]   & \bullet\ar[r]   & \bullet\ar[r]      & \cdots  \\
   & \bullet\ar[u]  & \bullet\ar[u]^3 & \bullet\ar[u]^5 & \bullet\ar[u]^7 & \bullet\ar[u]^9 & \bullet\ar[u]^{11} & \\ 
	}	
\end{displaymath}
We can also make ``thinner'' versions of the graph $F$ which still satisfy Condition (Y), for instance:
\\
\begin{displaymath}
	\xymatrix{
G: & \bullet\ar[r]  & \bullet\ar[r]   & \bullet\ar[r]   & \bullet\ar[r]   & \bullet\ar[r]   & \bullet\ar[r]      & \cdots  \\
   & \bullet\ar[u]  &                 & \bullet\ar[u]^4 &                 & \bullet\ar[u]^7 &  & \\ 
	}	
\end{displaymath}

\subsection{Examples of directed graphs which do not satisfy Condition (Y)}
The most ``generic'' example of a graph which does not satisfy Condition (Y) is probably:
\\
\begin{displaymath}
	\xymatrix{
H: & \bullet\ar[r] & \bullet\ar[r] & \bullet\ar[r] & \bullet\ar[r] & \bullet\ar[r] & \bullet\ar[r] & \cdots \\
	}	
\end{displaymath}
If we remove sufficiently many arrows from all but finitely many of the columns in any of the examples
$D$, $E$, $F$ and $G$, then we get graphs which do not satisfy Condition (Y), for instance:
\\
\begin{displaymath}
	\xymatrix{
I: & \bullet\ar[r]  & \bullet\ar[r]   & \bullet\ar[r]   & \bullet\ar[r]   & \bullet\ar[r]   & \bullet\ar[r]   & \cdots  \\
   &                & \bullet\ar[u]   & \bullet\ar[u]^2 & \bullet\ar[u]^3 & \bullet\ar[u]^4 & \bullet\ar[u]^5 & \\ 
	}	
\end{displaymath}
 \\
\begin{displaymath}
	\xymatrix{
J: & \bullet\ar[r]  & \bullet\ar[r] & \bullet\ar[r] & \bullet\ar[r]   & \bullet\ar[r] & \bullet\ar[r]      & \cdots  \\
   &                & \bullet\ar[u] &               & \bullet\ar[u]^3 &               & \bullet\ar[u]^5 & \\ 
	}	
\end{displaymath}
\\
\begin{displaymath}
	\xymatrix{
K: & \bullet\ar[r]  & \bullet\ar[r]   & \bullet\ar[r]   & \bullet\ar[r]   & \bullet\ar[r]   & \bullet\ar[r] & \cdots  \\
   &                &                 & \bullet\ar[u]^2 &                 & \bullet\ar[u]^4 &               & \\ 
	}	
\end{displaymath}

\end{document}